\documentclass[12pt]{amsart}

\makeatletter

\@addtoreset{equation}{section}
\makeatother
\usepackage[margin=3cm]{geometry}
\pagestyle{plain}
\newenvironment{nouppercase}{%
  \renewcommand{\uppercasenonmath}[1]{}}{}
\usepackage{amsmath,amssymb,amsthm,color}
\usepackage[T1]{fontenc}
\DeclareMathOperator{\Ima}{Im}
\theoremstyle{definition}
\newtheorem{thm}[equation]{Theorem}
\newtheorem{prop}[equation]{Proposition} 
 
\newtheorem*{conj*}{Conjecture}
\newtheorem*{thm*}{Theorem}
\newtheorem{rem}[equation]{Remark} 
\newtheorem{cor}[equation]{Corollary}

\newtheorem{lem}[equation]{Lemma}
\newtheorem{exam}[equation]{Example}
\begin{document}

\title{On the Bhargava factorial of polynomial maps}
\author{Wataru Takeda}
\address{Department of Applied Mathematics, Tokyo University of Science,
1-3 Kagurazaka, Shinjuku-ku, Tokyo 162-8601, Japan.}
\email{w.takeda@rs.tus.ac.jp}
\subjclass[2020]{13F20, 11C08, 11B83}
\keywords{Bhargava factorial, Discrete valuation ring, Stirling formula}

\begin{abstract}
Bhargava introduced a generalization of the factorial function to
extend classical results in integers to Dedekind rings and unify them.
We study the Bhargava factorial of the images of polynomial maps from an analytic perspective.
We first give the $\mathfrak p$-adic closures of the images of polynomial maps, which is the key to compute $\mathfrak p$-adic part of the Bhargava factorial. Then, as a special case, we give the Stirling formula for the image of quadratic polynomials with integer coefficients. 
\end{abstract}
\begin{nouppercase}
    \maketitle
\end{nouppercase}

\section{Introduction}
Bhargava generalized the factorial function $l!$ to
generalize classical results in $\mathbb Z$ to Dedekind rings and unify them \cite{bh97,bh00}. 
Let $S$ be an infinite subset of a Dedekind ring $R$. 
First, we define a $\mathfrak p$-ordering of $S$ for a prime ideal $\mathfrak p$ of $R$. A $\mathfrak p$-ordering of $S$ is any sequence $(a_n)_{n=0}^\infty$ of elements of $S$ that is formed as follows:
\begin{itemize}
    \item We choose any element $a_0\in S$;
    \item For $k\ge1$, we choose an element $a_n\in S$ such that 
\[v_{\mathfrak p}^R\left(\prod_{k=0}^{n-1}(a_n-a_k)\right)= \inf_{x\in S}v^R_{\mathfrak p}\left(\prod_{k=0}^{n-1}(x-a_k)\right),\]
where $v_{\mathfrak p}^R$ is the $\mathfrak p$-adic valuation defined by 
$v_{\mathfrak p}^R(a)=\max\{v\ge0~|~\mathfrak p^v|(a)R\}$.
\end{itemize}
For a $\mathfrak p$-ordering of $S$, we construct the $\mathfrak p$-sequence $(v_{\mathfrak p}(n;S))_{n=1}^\infty$ as 
\[v^R_{\mathfrak p}(n;S)=v^R_{\mathfrak p}\left(\prod_{k=0}^{n-1}(a_n-a_k)\right).\]  
It is known that the associated $\mathfrak p$-sequence of $S$ is independent of the choice of $\mathfrak p$-ordering of $S$. 
With these settings, we define the Bhargava factorial $l!_S^R$ by 
\[l!_S^R=\prod_{\mathfrak p:\text{prime ideal}}\mathfrak p^{v_{\mathfrak p}^R(l;S)}.\]
Let $R_{\mathfrak p}$ be the localization of $R$ at $\mathfrak p$. Then $R_{\mathfrak p}$ is a discrete valuation ring with the maximum ideal $\mathfrak m=\mathfrak pR_{\mathfrak p}$ and valuation $v_{\mathfrak m}^{R_{\mathfrak p}}$. We denote by $\tau:R\rightarrow R_{\mathfrak p}$ the standard embedding of $R$ into $R_{\mathfrak p}$.
By the definition, one can confirm that 
\begin{equation}
    \label{loc}
    {v_{\mathfrak m}^{R_{\mathfrak p}}(l;\tau(S))}={v_{\mathfrak p}^R(l;S)}.
\end{equation}
In this paper, if there is no confusion, we use the abbreviation $v_{\mathfrak p}(l;S)$ and $l!_S$ for $v_{\mathfrak p}^R(l;S)$ and $l!_S^R$, respectively.

We now give some examples of the Bhargava factorial. 
When $S=R=\mathbb Z$, we can choose the natural ordering $0,1,2,3,\ldots$ as $p$-ordering of $l!_S$ for all primes $p$ and find $l!_S$ is the ordinary factorial $l!$. 
Also, when $S(a,b)=\{an+b~|~n\in\mathbb Z\}$ for some $a,b\in\mathbb Z$ then $l!_{S(a,b)}=a^ll!$. 

Bhargava proposed a generalization for several variables in \cite{bh00} and pointed out that analogs of properties of the Bhargava factorial also hold for this generalization. Evrard \cite{ev12} extended the Bhargava factorial in several variables and provided proofs of all Bhargava’s claims in \cite{bh00}.
Several years later, Rajkumar, Reddy, and Semwal also presented a generalization of this factorial \cite{kad18}. 
These papers generalized classical algebraic properties in $\mathbb Z$ to arbitrary subsets $\underline{S}\subset R^n$.

On the other hand, Bhargava also introduced many analytic questions in \cite{bh00}.
For example, Question 31 ``What are analogs of Stirling's formula for generalized factorials?''.
Recently, Daiz obtained asymptotic formulas for the set of primes $\mathbb P$ and a certain set $S$ associated with the sequence found as A202367 at www.oeis.org. \cite{Da20}.
More precisely, it is shown that 
\[\log(l + 1)!_{\mathbb P} = \log l! + Cl + o(l),\]
where
\[C =\sum_{p\in\mathbb P}\frac{\log p}{
(p-1)^2}= 1.2269688\cdots\]
and similar form formula for $l!_{S}$.
In this paper, we also study the analytic properties of the Bhargava factorials of the images of polynomial maps $f(R)$ and present an analytic estimate of a certain Bhargava factorial for subsets of $\mathbb Z$.
This gives a partial solution to Bhargava's question \cite{bh00}.

An outline for the rest of this article is as follows.
In Section \ref{secondsec}, we review that the $\mathfrak p$-adic part of Bhargava factorials of $S$ is depended on only the $\mathfrak p$-adic closures ${\overline S}^{\mathfrak p}$ and prove a weak form of Hensel's lemma to identify the $\mathfrak p$-adic closure of $f(R)$.
Thereafter, in Section \ref{thirdsection}, we focus on quadratic polynomial $f(x)\in R[x]$ under the assumption that $R$ is a principal ideal domain. By \eqref{loc}, this assumption does not restrict the case. In Theorem \ref{quad}, which is one of the main results of this paper, we give the $\mathfrak p$-adic closure of $f(R)$ all but one case.
In Section \ref{stirlingsec}, we deal with the case $R=\mathbb Z$. When $f$ is quadratic, we can characterize $\mathfrak p$-adic closure of $f(\mathbb Z)$ for any $\mathfrak p$. As one of its corollaries, we give a kind of the Stirling formula for Bhargava factorial.


\section{Estimate of $\mathfrak p$-adic part of Bhargava factorials}
\label{secondsec}
In this section, we prove key results for calculating $v_{\mathfrak p}(l;f(R))$ with $f(x)\in R[x]$. 
The following fact verifies that it suffices to consider the $\mathfrak p$-adic closures ${\overline S}^{\mathfrak p}$ of $S$ in $R$.
\begin{lem}[\cite{ac15}]
\label{clo}
Let $S \subset T$ be two subsets of a Dedekind ring $R$
and let $\mathfrak p$ be a prime ideal of $R$. Then, the following assertions are equivalent:
\begin{enumerate}
\item For all $l \in \mathbb Z_{\ge0}$, $v_{\mathfrak p}(l;S) = v_{\mathfrak p}(l;T)$,
\item the $\mathfrak p$-adic closures ${\overline S}^{\mathfrak p}$ and ${\overline T}^{\mathfrak p}$ of $S$ and $T$ in $R$ are equal.
\end{enumerate}
\end{lem}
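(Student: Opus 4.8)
The plan is to first reduce to the case where $R$ is a discrete valuation ring. By \eqref{loc} both $v_{\mathfrak p}(l;\cdot)$ and the $\mathfrak p$-adic closure are unchanged under localization at $\mathfrak p$, so I may assume $R=R_{\mathfrak p}$ is a DVR with valuation $v=v_{\mathfrak p}$ and (as is the case for the Dedekind rings relevant here) finite residue field. Since $S\subseteq T$ one always has $\overline{S}^{\mathfrak p}\subseteq\overline{T}^{\mathfrak p}$, so only the reverse inclusion is ever in question. Throughout I will use two standard facts: (F1) for fixed $a$ the map $x\mapsto v(x-a)$ is locally constant wherever it is finite, so $x\mapsto\sum_k v(x-a_k)$ is $\mathfrak p$-adically continuous; and (F2) for any $\mathfrak p$-ordering, the partial sums satisfy $D_l(S):=\sum_{n=0}^{l}v_{\mathfrak p}(n;S)=\min_{a_0,\dots,a_l\in S}v\big(\prod_{i<j}(a_j-a_i)\big)$, with the minimum attained simultaneously along the ordering.

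For the implication (2)$\Rightarrow$(1) I would prove the stronger statement $v_{\mathfrak p}(l;S)=v_{\mathfrak p}(l;\overline{S}^{\mathfrak p})$ for every $S$. Given a $\mathfrak p$-ordering $(a_n)$ of $S$, fact (F1) shows that enlarging $S$ to $\overline{S}^{\mathfrak p}$ lowers none of the infima $\inf_{x}\sum_{k<n}v(x-a_k)$: a limit point of $S$ at which the sum is finite is approximated by points of $S$ carrying the very same value, while limit points where the sum is infinite are irrelevant to an infimum. Hence $(a_n)$ is at the same time a $\mathfrak p$-ordering of $\overline{S}^{\mathfrak p}$ with the same $\mathfrak p$-sequence, giving $v_{\mathfrak p}(l;S)=v_{\mathfrak p}(l;\overline{S}^{\mathfrak p})$. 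Applying this to both $S$ and $T$ and using $\overline{S}^{\mathfrak p}=\overline{T}^{\mathfrak p}$ yields (1).

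The substantive direction is (1)$\Rightarrow$(2), which I would prove by contraposition. If $\overline{S}^{\mathfrak p}\subsetneq\overline{T}^{\mathfrak p}$ then, since $\overline{S}^{\mathfrak p}$ is closed and $T$ is dense in $\overline{T}^{\mathfrak p}$, there is an actual element $t\in T\setminus\overline{S}^{\mathfrak p}$; let $N\ge1$ be minimal with $v(t-s)\le N-1$ for all $s\in S$. Applying (F2) to the $l+1$ points $a_0,\dots,a_{l-1},t\in T$, where $(a_n)$ is a $\mathfrak p$-ordering of $S$, together with the hypothesis $D_\bullet(S)=D_\bullet(T)$, reduces the whole matter to producing a single index $l$ with $\sum_{i<l}v(t-a_i)<v_{\mathfrak p}(l;S)$, which then contradicts (1). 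I would obtain such an $l$ by induction on $N$. The base case $N=1$ is immediate from the level-one count: $t$ lies in a residue class mod $\mathfrak p$ missed by $S$, so with $r=\#\{\text{classes mod }\mathfrak p\text{ meeting }S\}$ one has $v_{\mathfrak p}(r;S)\ge1>0=v_{\mathfrak p}(r;S\cup\{t\})$. For the inductive step I would pass to the residue class $c$ of $t$ modulo $\mathfrak p$ (which $S$ does meet), rescale by a uniformizer, and use the min-plus convolution
\[
D_l(S)=\min_{l_1+\cdots+l_r=l+1}\ \sum_{i=1}^{r}\Big[\binom{l_i}{2}+D_{l_i-1}\big(S^{(i)}\big)\Big],
\]
which expresses $D_l(S)$ through the $D$-sequences of the rescaled pieces $S^{(i)}$ of the classes mod $\mathfrak p$. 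Adding $t$ alters only the piece $S^{(c)}$, whose separating level is $N-1$, so by induction its $D$-sequence drops strictly at some index; inserting this into the convolution for $l$ large enough that every optimal partition is forced to place many points in class $c$ then propagates the strict drop to $D_l(S\cup\{t\})<D_l(S)$.

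I expect the main obstacle to be exactly this last propagation: establishing the convolution relation and verifying that a strict decrease in one component's $D$-sequence survives the outer minimization. The difficulty is that the minimizing partition could in principle route around the cheaper index in class $c$; controlling this is precisely where the finiteness of the residue field enters, since it forces every class to absorb unboundedly many points as $l$ grows and thereby guarantees that the strict improvement in $S^{(c)}$ is eventually exercised.
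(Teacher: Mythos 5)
The paper itself contains no proof of Lemma \ref{clo}: it is imported verbatim from Adam--Chabert \cite{ac15}. So your argument can only be judged on its own merits. Much of it is sound. The localization step; the direction (2)$\Rightarrow$(1) via the observation that a $\mathfrak p$-ordering of $S$ remains a $\mathfrak p$-ordering of $\overline S^{\mathfrak p}$ with the same sequence; the reduction of (1)$\Rightarrow$(2) --- granting Bhargava's simultaneous-minimization theorem, your (F2), which is a genuine but citable input --- to producing a single $l$ with $D_l(S\cup\{t\})<D_l(S)$; the base case $N=1$; the min-plus convolution over residue classes; and the persistence of a strict drop in $D_\bullet$ (by term-wise monotonicity $v(m;A\cup\{t'\})\le v(m;A)$) are all correct. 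Your insertion of the finite-residue-field hypothesis is also not cosmetic: the lemma as stated fails for $R=\mathbb C[t]$, $\mathfrak p=(t)$, $S=\mathbb Z\subset T=\mathbb Z\cup\{t\}$, where both $\mathfrak p$-sequences vanish identically yet the closures differ, so finiteness must enter somewhere.

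The gap is exactly where you predicted it, and the justification you offer there is false. It is \emph{not} true that finiteness of the residue field forces every residue class to absorb unboundedly many points of an optimal tuple as $l\to\infty$. Take $R=\mathbb Z_{(p)}$, $S=\{0\}\cup(1+p\mathbb Z)$, and $t=p^{5}$, so the class $c$ of $t$ modulo $\mathfrak p$ meets $S$ only in the single point $0$. Every optimal tuple for $S$ contains at most one point of class $c$ (a repeated point gives infinite valuation), so your $l_c^{0}$ is bounded by $1$ for all $l$, and the strict drop in $D_\bullet$ of the rescaled piece is never exercised by the outer minimization; worse, the inner set $S^{(c)}=\{0\}$ is finite, so the inductive hypothesis (a statement about $\mathfrak p$-sequences, hence about infinite sets) does not even apply to it. The conclusion of the lemma still holds in this example, but for a different reason, namely a one-point exchange: placing $t$ next to $0$ costs $v(t-0)=5$, while it relieves the crowded class, whose marginal cost diverges; concretely $D_l(T)\le 5+\binom{l-1}{2}+\sum_{n=0}^{l-2}v_p(n!)$, which is strictly below $D_l(S)=\binom{l}{2}+\sum_{n=0}^{l-1}v_p(n!)$ as soon as $(l-1)+v_p((l-1)!)>5$. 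Turning this exchange idea into a general argument --- where class $c$ may carry a growing but sub-proportional load, and the cost of inserting $t$ is $\sum_{i}v(t-a_i)$, which is bounded per point by $N-1$ but not bounded in total --- is precisely the substantive content of the implication (1)$\Rightarrow$(2), and it is missing from your proposal.
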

In this paper, we focus on the Bhargava factorials of $f(R)$. To deal with this, we introduce the following lemma which takes advantage of polynomials' strengths.
The following lemma is a weak form of Hensel's lemma. (See \cite{ma} for the detail of the original Hensel's lemma).
\begin{lem}[Weak form of Hensel's lemma]
\label{cha}
Let $R$ be a principal ideal domain. For fixed a prime ideal $\mathfrak p=(\pi)$ of $R$, we assume the residue field $k=R/\mathfrak p$ is finite. 
We denote by $C_{\mathfrak p}$ a complete system of distinct representatives for $R/\mathfrak p$. 
Then for any polynomial $f(x)\in R[x]$ and $a\in C_{\mathfrak p}$, the following first assertion implies the following second assertion. 
\begin{enumerate}
    \item There exists $b\in R$ such that $f(b)\equiv a\mod \mathfrak p$ and $f'(b)\not\in\mathfrak p$.
    \item For any positive integer $k$ and for any $c_i\in C_{\mathfrak p}$, there exists $b\in R$ such that $f(b)\equiv a+c_1\pi+\cdots+c_{k-1}\pi^{k-1}\mod \mathfrak p^k$.
\end{enumerate}
\end{lem}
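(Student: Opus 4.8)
The plan is to prove the implication by successive approximation, lifting a solution one power of $\mathfrak p$ at a time; this is the linear (additive) version of the Newton iteration underlying Hensel's lemma. Fix the target value $t=a+c_1\pi+\cdots+c_{k-1}\pi^{k-1}$ and let $b_1\in R$ be the element supplied by assertion (1), so that $f(b_1)\equiv a\equiv t\pmod{\mathfrak p}$ and $f'(b_1)\notin\mathfrak p$. I would then prove, by induction on $m$ for $1\le m\le k$, the existence of $b_m\in R$ with $f(b_m)\equiv t\pmod{\mathfrak p^m}$ and $f'(b_m)\notin\mathfrak p$; the case $m=k$ is exactly assertion (2), since $t\equiv a+c_1\pi+\cdots+c_{k-1}\pi^{k-1}\pmod{\mathfrak p^k}$.

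For the inductive step, suppose $b_m$ has been constructed with $m<k$. Because $\mathfrak p=(\pi)$ is principal, $\mathfrak p^m=(\pi^m)$, and since $f(b_m)-t\in\mathfrak p^m$ we may write $f(b_m)-t=\pi^m e$ with $e\in R$; I then search for the next approximation in the form $b_{m+1}=b_m+\pi^m y$ with $y\in R$ to be determined. The key algebraic input is the finite Taylor expansion of a polynomial over a commutative ring: for any $f\in R[x]$ one has $f(x+h)=f(x)+f'(x)h+h^2q(x,h)$ for some $q\in R[x,h]$, the binomial coefficients that appear being images of integers and hence lying in $R$. Substituting $x=b_m$ and $h=\pi^m y$ gives $f(b_{m+1})-t=\pi^m\bigl(e+f'(b_m)y\bigr)+\pi^{2m}q(b_m,\pi^m y)$, and since $2m\ge m+1$ the last term lies in $\mathfrak p^{m+1}$. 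Thus it suffices to solve $e+f'(b_m)y\equiv0\pmod{\mathfrak p}$ for $y$.

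Because $\mathfrak p$ is a nonzero prime of a principal ideal domain it is maximal, so $R/\mathfrak p$ is a field; combined with $f'(b_m)\notin\mathfrak p$ this makes $f'(b_m)$ invertible modulo $\mathfrak p$, and I simply take any $y\in R$ whose residue equals $-e\,f'(b_m)^{-1}$ in $R/\mathfrak p$. This choice forces $f(b_{m+1})\equiv t\pmod{\mathfrak p^{m+1}}$, and since $b_{m+1}\equiv b_m\pmod{\mathfrak p}$ we also obtain $f'(b_{m+1})\equiv f'(b_m)\not\equiv0\pmod{\mathfrak p}$, so the derivative hypothesis propagates and the induction closes. I expect the only delicate point to be the justification of the finite Taylor expansion over a ring that need not contain $\mathbb Q$: one must observe that the expansion is purely formal, with no division by factorials beyond recording the linear coefficient $f'(x)$, and then keep track of the fact that the quadratic tail $\pi^{2m}q(b_m,\pi^m y)$ is absorbed modulo $\mathfrak p^{m+1}$. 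I remark that the finiteness of $k=R/\mathfrak p$ is not actually needed for this argument; only that $R/\mathfrak p$ is a field is used.
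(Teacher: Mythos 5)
Your proof is correct, but it takes a genuinely different route from the paper's. You run the standard Newton--Hensel iteration: starting from the $b$ supplied by assertion (1), you lift the solution one power of $\mathfrak p$ at a time, at each stage solving the linear congruence $e+f'(b_m)y\equiv 0\pmod{\mathfrak p}$, which is possible because $f'(b_m)$ remains invertible modulo $\mathfrak p$; the formal Taylor expansion $f(x+h)=f(x)+f'(x)h+h^2q(x,h)$ with $q\in R[x,h]$ is exactly the right tool and needs no division. The paper instead argues by counting: it shows that $\beta\mapsto f(b+\beta\pi)$ induces an injective map from residues modulo $\mathfrak p^{k-1}$ to the residues modulo $\mathfrak p^k$ that reduce to $a$ modulo $\mathfrak p$, via the factorization $f(b+\beta_1\pi)-f(b+\beta_2\pi)=(\beta_1-\beta_2)\bigl(f'(b)\pi+(\text{terms in }\mathfrak p^2)\bigr)$, whose second factor has valuation exactly $1$; since both finite sets have exactly $(\#(R/\mathfrak p))^{k-1}$ elements, injectivity forces surjectivity, which is assertion (2). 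The two approaches buy different things. The paper's pigeonhole argument is shorter, avoids induction, and shows in addition that the preimage is unique within the class $b+\mathfrak p$ (i.e., $\beta$ is determined modulo $\mathfrak p^{k-1}$), but it genuinely requires the finiteness of the residue field, since it converts an injection between equal-cardinality finite sets into a bijection. Your iteration is constructive and, as you correctly remark, does not use finiteness at all --- only that $R/\mathfrak p$ is a field --- which is the formulation that generalizes to complete local rings and to the full Hensel's lemma. One small refinement of your remark: you obtain field-ness of $R/\mathfrak p$ from maximality of the nonzero prime $\mathfrak p$ in a principal ideal domain, which is fine, but under the paper's hypotheses you could equally deduce it from finiteness, since a finite integral domain is a field.
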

\begin{proof}
Let $f(x)=\sum_{i=0}^na_ix^i$. By considering the number of the representatives $c\in C_{\mathfrak p}$ for $R/\mathfrak p^k$ with $c\equiv a\mod \mathfrak p$, it suffices to show that if $f(b+\beta_1\pi) \equiv f(b+\beta_2\pi) \mod \mathfrak p^k$ then $\beta_1\equiv\beta_2\mod \mathfrak p^{k-1}$. Subtracting $f(b+\beta_1\pi)-f(b+\beta_2\pi)$, we have
\begin{align*}
 f(b+\beta_1\pi)-f(b+\beta_2\pi)&=(\beta_1-\beta_2)\sum_{i=1}^n\frac{f^{(i)}(b)}{i!}\frac{\beta_1^i-\beta_2^i}{\beta_1-\beta_2}\pi^i\\
 &=(\beta_1-\beta_2)\left(\sum_{i=2}^n\frac{f^{(i)}(b)}{i!}\frac{\beta_1^i-\beta_2^i}{\beta_1-\beta_2}\pi^i+f'(b)\pi\right).
\end{align*}
By the assumption $f'(b)\not\in \mathfrak p$, the second factor of the last formula 
\[\sum_{i=2}^n\frac{f^{(i)}(b)}{i!}\frac{\beta_1^i-\beta_2^i}{\beta_1-\beta_2}\pi^i+f'(b)\pi\not\equiv0\mod \mathfrak p^2.\] 
Therefore, if $f(b+\beta_1\pi) \equiv f(b+\beta_2\pi) \mod \mathfrak p^k$ then $\beta_1\equiv\beta_2\mod \mathfrak p^{k-1}$. This is the desired conclusion.
\end{proof}
For a fixed complete system of representatives $C_{\mathfrak p}$ for $R/\mathfrak p$, we define two subsets of $C_{\mathfrak p}$ as \[S_{f,\mathfrak p}=\{a\in C_{\mathfrak p}~|~\exists b, f(b)\equiv a\mod\mathfrak p, f'(b)\not\equiv 0\mod\mathfrak p\}\] and \[S_{f,\mathfrak p}^-=\{a\in C_{\mathfrak p}~|~\exists b, f(b)\equiv a\mod\mathfrak p,f'(b)\equiv 0\mod\mathfrak p\}.\]
For example, if $f(x)=x^2$ then $S_{f,\mathfrak p}$ is the set of quadratic residues modulo $\mathfrak p$ excluding $0$.
\begin{cor}
\label{bit}
Let $\mathfrak p=(\pi)$ be a prime ideal of a principal ideal domain $R$ and let $f(x)\in R[x]$. Then 
\[\overline{f(R)}^{\mathfrak p}=\bigsqcup_{a\in S_{f,\mathfrak p}}(a+\mathfrak p)\sqcup E,\]
where $E$ is caused from $S_{f,\mathfrak p}^-\setminus S_{f,\mathfrak p}$ and satisfies  \[E\subset\bigsqcup_{a\in S_{f,\mathfrak p}^-\setminus S_{f,\mathfrak p}}\bigcup_{\substack{b\in C_{\mathfrak p}\\f(b)\equiv a\mod\mathfrak p}}(f(b)+\mathfrak p^{2}).\]
\end{cor}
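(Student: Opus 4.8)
The plan is to describe $\overline{f(R)}^{\mathfrak p}$ one residue class at a time. Recall that $x\in\overline{f(R)}^{\mathfrak p}$ precisely when for every $k\ge1$ there exists $r\in R$ with $f(r)\equiv x\bmod\mathfrak p^k$. Since each $x\in R$ is congruent modulo $\mathfrak p$ to a unique $a\in C_{\mathfrak p}$, I would split $R$ into three families of classes $a+\mathfrak p$ according to whether $a\in S_{f,\mathfrak p}$, $a\in S_{f,\mathfrak p}^-\setminus S_{f,\mathfrak p}$, or $a$ lies in neither set, and determine $(a+\mathfrak p)\cap\overline{f(R)}^{\mathfrak p}$ in each case. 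The set $E$ is then by definition the portion of the closure sitting over the residues in $S_{f,\mathfrak p}^-\setminus S_{f,\mathfrak p}$, so the bound on $E$ is the substantive part.

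For $a\in S_{f,\mathfrak p}$ I claim the entire coset lies in the closure. Given $x\equiv a\bmod\mathfrak p$, write its $\mathfrak p$-adic digits as $x\equiv a+c_1\pi+\cdots+c_{k-1}\pi^{k-1}\bmod\mathfrak p^k$ with $c_i\in C_{\mathfrak p}$. Since $a\in S_{f,\mathfrak p}$ supplies exactly hypothesis (1) of Lemma \ref{cha}, its conclusion (2) produces $b\in R$ with $f(b)\equiv x\bmod\mathfrak p^k$; as $k$ is arbitrary, $x\in\overline{f(R)}^{\mathfrak p}$, whence $a+\mathfrak p\subset\overline{f(R)}^{\mathfrak p}$. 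Ranging over $a\in S_{f,\mathfrak p}$ gives the disjoint union $\bigsqcup_{a\in S_{f,\mathfrak p}}(a+\mathfrak p)$. When $a$ lies in neither $S_{f,\mathfrak p}$ nor $S_{f,\mathfrak p}^-$, no element of $f(R)$ is congruent to $a$ modulo $\mathfrak p$, so already the first approximation step ($k=1$) fails and $(a+\mathfrak p)\cap\overline{f(R)}^{\mathfrak p}=\emptyset$. Consequently whatever remains of the closure, namely $E$, sits over $S_{f,\mathfrak p}^-\setminus S_{f,\mathfrak p}$.

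The remaining case $a\in S_{f,\mathfrak p}^-\setminus S_{f,\mathfrak p}$ carries the only real content and yields the inclusion for $E$. Here every $b\in C_{\mathfrak p}$ with $f(b)\equiv a\bmod\mathfrak p$ is a critical residue, i.e.\ $f'(b)\equiv0\bmod\mathfrak p$. Take $x\in\overline{f(R)}^{\mathfrak p}$ with $x\equiv a\bmod\mathfrak p$ and use the level $k=2$: there is $r\in R$ with $f(r)\equiv x\bmod\mathfrak p^2$, and writing $r=b+s$ with $b\in C_{\mathfrak p}$ and $s\in\mathfrak p$, the algebraic expansion $f(b+s)=f(b)+f'(b)\,s+s^2g$ for some $g\in R$ (which holds because the finite-difference coefficients $\binom{i}{j}$ already lie in $R$, so no division by $i!$ is required) gives $f(r)\equiv f(b)\bmod\mathfrak p^2$, since $f'(b)\,s\in\mathfrak p^2$ and $s^2\in\mathfrak p^2$. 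Hence $x\in f(b)+\mathfrak p^2$ with $f(b)\equiv a\bmod\mathfrak p$, which is exactly the asserted inclusion for $E$.

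I expect this third case to be the main obstacle, though it is a mild one: the key observation is that once the derivative vanishes modulo $\mathfrak p$, the linear term of the expansion is absorbed into $\mathfrak p^2$, so $f$ cannot separate the points of a critical class beyond the second level and $E$ is trapped in the $\mathfrak p^2$-neighborhoods of the critical values. The only point demanding care is to realize the expansion through the integral finite-difference identity rather than a genuine Taylor expansion, so that the argument remains valid over an arbitrary principal ideal domain.
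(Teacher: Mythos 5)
Your proof is correct and follows essentially the same route as the paper's: Lemma \ref{cha} gives the full cosets over $S_{f,\mathfrak p}$, and the second-order expansion of $f$ at a critical residue (where $f'(b)\equiv 0\bmod\mathfrak p$) traps everything lying over $S_{f,\mathfrak p}^-\setminus S_{f,\mathfrak p}$ inside the sets $f(b)+\mathfrak p^2$. Your two refinements --- approximating a closure point at level $k=2$ rather than only bounding the image $f(b+\mathfrak p)$, and using the integral finite-difference expansion so that no division by $i!$ occurs --- tighten steps the paper leaves implicit, but the underlying argument is the same.
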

\begin{proof}
First, we deal with elements $\alpha\in a+\mathfrak p$ with $a\in S_{f,\mathfrak p}$. By Lemma \ref{cha}, for all $k\in\mathbb Z_{\ge0}$ there exists $b\in R$ such that $\alpha\equiv f(b)\mod \mathfrak p^k$. Therefore, $\alpha\in \overline{f(R)}^{\mathfrak p}$. 

Next, we consider elements in $a+\mathfrak p$ with $a\in S_{f,\mathfrak p}^-\setminus S_{f,\mathfrak p}$. Let $b\in R$ such that $f(b)\equiv a\mod \mathfrak p$. Since $f'(b)\in\mathfrak p$, for all $\beta=b+\beta_1\pi\in b+\mathfrak p$, we have 
\begin{align*}
    f(\beta)-f(b)&=f(b+\beta_1\pi)-f(b)\\
    &=\sum_{i=2}^n\frac{f^{(i)}(b)}{i!}\beta_1^{i}\pi^i+f'(b)\beta_1\pi\\
    &\equiv 0\mod\mathfrak p^2.
\end{align*}
Therefore, for all $\beta\in b+\mathfrak p$, we have $f(b)\equiv f(\beta)\mod\mathfrak p^2$ and $f(\beta)\in f(b)+\mathfrak p^2$. 
Applying the same argument to other $a\in S_{f,\mathfrak p}^-\setminus S_{f,\mathfrak p}$ and $b\in C_{\mathfrak p}$ with $f(b)\equiv a\mod\mathfrak p$, we obtain 
 \[E\subset\bigsqcup_{a\in S_{f,\mathfrak p}^-\setminus S_{f,\mathfrak p}}\bigcup_{\substack{b\in C_{\mathfrak p}\\f(b)\equiv a\mod\mathfrak p}}(f(b)+\mathfrak p^{2}).\]
This proves this corollary.
\end{proof}
If $S_{f,\mathfrak p}^-\setminus S_{f,\mathfrak p}=\emptyset$, then we can avoid dealing the uncertain set $E$. For example, when we consider a polynomial $f(x)=x^3-x^2\in \mathbb Z[x]$ and $p=5$, we find that the solution to $f'(x)\equiv0\mod 5$ is $x\equiv0,4\mod5$.
On the other hand, we confirm that $f(0)=f(1)$ and $f(3)\equiv f(4)\mod5$. 
Thus, it suffices to compute $\overline{f(5\mathbb Z+1)}^{5}$ and $\overline{f(5\mathbb Z+3)}^{5}$ instead of $\overline{f(5\mathbb Z)}^{5}$ and $\overline{f(5\mathbb Z+4)}^{5}$. 
We note that $\overline{f(5\mathbb Z+4)}^{5}\subset\overline{f(5\mathbb Z+3)}^{5}$ and $\overline{f(5\mathbb Z)}^{5}\subset\overline{f(5\mathbb Z+1)}^{5}$.
More generally, we present some examples for $S_{f,\mathfrak p}^-\setminus S_{f,\mathfrak p}=\emptyset$ in the following.
\begin{exam}
\label{24}
Let $R$ be a Dedekind ring with ${\rm char}(R)\neq3$ and $f(x)=x^3-x^2\in R[x]$. Then, it holds that for any prime ideal $\mathfrak p$ of $R$,
\[\overline{f(R)}^{\mathfrak p}=\bigsqcup_{a\in S_{f,\mathfrak p}}(a+\mathfrak p).\]
\end{exam}
\begin{proof}
It suffices to prove that for any $a\in S_{f,\mathfrak p}^-$ there exists an element $b\in S_{f,\mathfrak p}$ with $f(a)\equiv f(b)\mod \mathfrak p$. 

First, we consider the case ${\rm char}(R)=2$. In this case, the equation $f'(x)=0$ has the only solution $x=0$. As $f(0)=f(1)$, we have $S_{f,\mathfrak p}^-\setminus S_{f,\mathfrak p}=\emptyset$ for each prime ideal $\mathfrak p$.

Next, we deal with the case ${\rm char}(R)\neq2,3$. If $v_{\mathfrak p}(6)\ge1$ then $x\equiv0\mod \mathfrak p$ is the only solution to $f'(x)\equiv0\mod \mathfrak p$. 
As above, the identity $f(0)=f(1)$ ensures that $S_{f,\mathfrak p}^-\setminus S_{f,\mathfrak p}=\emptyset$.
For any prime ideal $\mathfrak p$ with $v_{\mathfrak p}(6)=0$, we have two solutions $x\equiv 0, 2\cdot 3^{-1}\mod \mathfrak p$ to $f'(x)\equiv0\mod \mathfrak p$, where $3^{-1}$ is a preimage of an inverse of $3$ in mod $\mathfrak p$. As $f(0)\equiv f(1)\mod\mathfrak p$ and $f(2\cdot 3^{-1})\equiv f(-1\cdot 3^{-1})\mod\mathfrak p$, we also obtain $S_{f,\mathfrak p}^-\setminus S_{f,\mathfrak p}=\emptyset$. Thus, the assertion holds.
\end{proof}
More generally, we may show the following result in a similar argument.
\begin{exam}
Let $R$ be a Dedekind ring with ${\rm char}(R)\neq3$. Let $\alpha,\beta\in R$ and let $f(x)=x^3-(2\alpha+\beta)x^2+(\alpha^2+2\alpha\beta)x\in R[x]$. Then, we find that
for any prime ideal $\mathfrak p$ of $R$ with $\alpha\not\equiv \beta\mod \mathfrak p$,
\[\overline{f(R)}^{\mathfrak p}=\bigsqcup_{a\in S_{f,\mathfrak p}}(a+\mathfrak p).\]
\end{exam}
\begin{proof}
As in the proof of Example \ref{24}, it suffices to prove that for any $a\in S_{f,\mathfrak p}^-$ there exists an element $b\in S_{f,\mathfrak p}$ with $f(a)\equiv f(b)\mod \mathfrak p$.

Since $f'(x)=(3x-(\alpha+2\beta))(x-\alpha)$, 
the candidate elements in $S_{f,\mathfrak p}^-$ are $x\equiv\alpha,(\alpha+2\beta)\cdot 3^{-1}\mod \mathfrak p$, if $3$ is invertible mod $\mathfrak p$. We may find that $f(\alpha)= f(\beta)$ and $f((\alpha+2\beta)\cdot 3^{-1})\equiv f((4\alpha-\beta)\cdot 3^{-1})\mod \mathfrak p$. The assumption $\alpha\not\equiv \beta\mod \mathfrak p$ ensure that $f'(\beta)\not\equiv 0\mod \mathfrak p$ and $f'((4\alpha-\beta)\cdot 3^{-1})\not\equiv 0\mod \mathfrak p$.
Thus, we can apply the same argument as the proof of Example \ref{24} and obtain the desired assertion.
\end{proof}

\section{Quadratic polynomials}
\label{thirdsection}
In this section, we assume $f$ is a quadratic polynomial. As we remark later, the higher degree cases become much more complicated than the quadratic case.

By (\ref{loc}) and Lemma \ref{clo}, it suffices to consider the $\mathfrak pR_{\mathfrak p}$-adic closure of $(\tau\circ f)(R_{\mathfrak p})$ in $R_{\mathfrak p}$ for each prime ideal $\mathfrak p$ of $R$. Therefore, in this section, we assume $R$ is a principal ideal domain and $\mathfrak p=(\pi)$.

Let $f(x)=a_2x^2+a_1x+a_0\in R[x]$ and let $v(\mathfrak p,f)=\min\{v_{\mathfrak p}(a_1),v_{\mathfrak p}(a_2)\}$. Then we define the important ideal:
\[\mathfrak g_f=\prod_{\mathfrak p}\mathfrak p^{v(\mathfrak p,f)}.\] By the definition of the Bhargava factorial, we find that \[l!_{f(R)}=\mathfrak g_f^l\times \prod_{\mathfrak p}l!_{f(R,\mathfrak p)},\]
where $f(R,\mathfrak p)$ is the following subset of $R$:
\[\left\{\frac{f(x)-a_0}{\pi^{v(\mathfrak p,f)}}~|~x\in R\right\}.\]
In the following, we assume $\mathfrak g_f=(1)$. 
We denote by $C_{\mathfrak p}=\{0,c_2,\ldots,c_N\}$ complete system of distinct representatives for $R/\mathfrak p$.
Then, we prove the following theorem.
\begin{thm}
\label{quad}
Let $R$ be a principal ideal domain and $\mathfrak p$ be a prime ideal of $R$. We assume that the residue class field $\kappa=R/\mathfrak p$ is finite.
For quadratic polynomial $f(x)=a_2x^2+a_1x+a_0\in R[x]$ with the ideal $\mathfrak g_f=(1)$, the followings hold.
\begin{enumerate}
\item[1.] If $v_{\mathfrak p}(2a_2)=0$, then
\[\overline{f(R)}^{\mathfrak p}=\{a_0+\alpha\}\sqcup\bigsqcup_{k=0}^\infty \bigsqcup_{i=1}^{N}(a_0+\alpha_{k,i}+\mathfrak p^{2k+1}),\]
where $\alpha,\alpha_{k,i}\in R$ and $N=\frac{\#\kappa-1}2$.
In particular, the set $\{\alpha_{0,1},\ldots,\alpha_{0,N}\}=S_{f,\mathfrak p}$.
\item[2.] If $v_{\mathfrak p}(a_2)\ge1$ then $\overline{f(R)}^{\mathfrak p}=R$.
\item[3.] If $v_{\mathfrak p}(2)\ge1$ and $v_{\mathfrak p}(a_2a_1)=0$, then
\[\overline{f(R)}^{\mathfrak p}=\bigsqcup_{i=1}^N(a_0+\alpha_{i}+\mathfrak p),\]
where $\alpha_{i}\in R$ and $N=\frac{\#\kappa}2$. 
\end{enumerate}
\end{thm}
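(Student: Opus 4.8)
The plan is to treat the three cases through a single mechanism. Corollary \ref{bit} expresses $\overline{f(R)}^{\mathfrak p}$ as the disjoint union $\bigsqcup_{a\in S_{f,\mathfrak p}}(a+\mathfrak p)$ together with an exceptional piece $E$ coming from $S_{f,\mathfrak p}^-\setminus S_{f,\mathfrak p}$, so the first task in every case is to locate the critical residues, i.e. the zeros of $f'(x)=2a_2x+a_1$ modulo $\mathfrak p$, and to decide whether the associated critical values already lie in $S_{f,\mathfrak p}$. The entire subtlety of the theorem is concentrated in the critical coset, and the three cases differ precisely in how $f'$ behaves modulo $\mathfrak p$.

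I would dispatch Cases 2 and 3 first, since in both the exceptional set is empty. If $v_{\mathfrak p}(a_2)\ge1$ then $\mathfrak g_f=(1)$ forces $v_{\mathfrak p}(a_1)=0$, and in Case 3 both $a_1,a_2$ are units while $v_{\mathfrak p}(2)\ge1$; in either situation $f'(x)\equiv a_1\not\equiv0\bmod\mathfrak p$ for every $x$, so $S_{f,\mathfrak p}^-=\emptyset$, the set $E$ is empty, and $\overline{f(R)}^{\mathfrak p}=\bigsqcup_{a\in S_{f,\mathfrak p}}(a+\mathfrak p)$. It then remains to count $S_{f,\mathfrak p}$, i.e. the image of $f$ modulo $\mathfrak p$. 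In Case 2 we have $f(x)\equiv a_1x+a_0\bmod\mathfrak p$, an affine bijection of $\kappa$, so every residue is attained and the closure is all of $R$. In Case 3 the residue characteristic is $2$, hence $x\mapsto x^2$ is additive and $f(x)\equiv a_2x^2+a_1x+a_0$ is an $\mathbb F_2$-affine map of $\kappa$ whose linear part $a_2x^2+a_1x$ has kernel $\{0,a_1/a_2\}$; thus the image is a coset of an $\mathbb F_2$-subspace of index $2$, of size $\#\kappa/2=N$, giving the stated $N$ cosets.

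The heart of the proof is Case 1, where $2a_2$ is a unit and I would complete the square. Since $2a_2\notin\mathfrak p$, the map $x\mapsto 2a_2x+a_1$ is a bijection of $R/\mathfrak p^k$ for every $k$, and the identity $4a_2f(x)=(2a_2x+a_1)^2-D$ with $D=a_1^2-4a_2a_0$ shows that, level by level modulo $\mathfrak p^k$, computing $\overline{f(R)}^{\mathfrak p}$ is equivalent to computing the closure $Q=\overline{\{y^2:y\in R\}}^{\mathfrak p}$ of the squares and then transporting it by the unit-affine homeomorphism $w\mapsto (w-D)/(4a_2)$. To determine $Q$ I would stratify by valuation: every nonzero square has even valuation $2k$ with unit part a square, and since $2$ is a unit, a unit is a square exactly when its residue is a nonzero square in $\kappa$, which by the weak Hensel Lemma \ref{cha} pins the unit squares down to the $N=(\#\kappa-1)/2$ residue cosets $s_i+\mathfrak p$. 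Because squaring doubles valuations, the part of $Q$ inside $\mathfrak p$ is self-similar, equal to $\pi^2Q$, and unwinding this recursion yields
\[Q=\{0\}\sqcup\bigsqcup_{k=0}^\infty\bigsqcup_{i=1}^N(\pi^{2k}s_i+\mathfrak p^{2k+1}).\]
Applying the affine map sends $0$ to the vertex value $a_0+\alpha$ with $\alpha=-a_1^2/(4a_2)$ and sends $\pi^{2k}s_i+\mathfrak p^{2k+1}$ to $a_0+\alpha_{k,i}+\mathfrak p^{2k+1}$, which is exactly the claimed description; the cosets with $k=0$ are the images of $f$ at non-critical points and so recover $S_{f,\mathfrak p}$.

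The main obstacle I expect is this square-closure computation: one must verify the self-similarity $Q\cap\mathfrak p=\pi^2Q$ rigorously, check that the cosets $\pi^{2k}s_i+\mathfrak p^{2k+1}$ are pairwise disjoint and accumulate only at $0$ so that $Q$ is genuinely closed, and confirm that restricting $y$ to the progression $2a_2R+a_1$ rather than to all of $R$ costs nothing because $2a_2$ is a unit modulo every $\mathfrak p^k$. The algebraic reductions in Cases 2 and 3 are routine by comparison; the only point requiring care there is the characteristic-$2$ additivity of $x\mapsto x^2$, which is what forces the image of $f$ to be a coset of an index-$2$ subgroup of $\kappa$.
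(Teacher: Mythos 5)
Your proposal is correct in substance, and in the main case it takes a genuinely different route from the paper. In Cases 2 and 3 the two arguments coincide: $\mathfrak g_f=(1)$ forces $v_{\mathfrak p}(a_1)=0$, so $f'(x)\equiv a_1\not\equiv 0\bmod \mathfrak p$ for all $x$, the exceptional set of Corollary \ref{bit} is empty, and one only has to count the image modulo $\mathfrak p$; here you actually supply a justification the paper omits, namely the characteristic-$2$ additivity argument giving $\#S_{f,\mathfrak p}=\#\kappa/2$ in Case 3 (the paper simply asserts this count). In Case 1, however, the paper never completes the square: it applies Corollary \ref{bit} once and then analyses $E$ by an explicit recursion, choosing $\alpha_{\mathfrak p}$ with $2a_2\mid(\alpha_{\mathfrak p}\pi-a_1)$, substituting $x=\pi\beta_2+\frac{\alpha_{\mathfrak p}\pi-a_1}{2a_2}$, and showing that $E$ is a translate of $\pi^2\,\overline{f_1(R)}^{\mathfrak p}$ for a new quadratic $f_1(x)=a_2x^2+\alpha_{\mathfrak p}x$ satisfying the same hypothesis, then iterating through $f_2,f_3,\ldots$ and adjoining the unique common critical point at the end. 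Your reduction $4a_2f(x)=(2a_2x+a_1)^2-D$ runs the same kind of recursion, but on the single canonical set of squares (via the self-similarity $Q\cap\mathfrak p=\pi^2Q$) rather than on a changing sequence of auxiliary polynomials; this buys cleaner bookkeeping and makes structurally obvious that every such closure is a unit-affine image of $\overline{s(R)}^{\mathfrak p}$ for $s(x)=x^2$ --- a fact the paper re-derives ad hoc (the bijection $\phi$) in the proof of Theorem \ref{pord}, and which Example \ref{example} recomputes for $x^2$ by repeating the recursion. What the paper's approach buys in exchange is independence from completing the square, staying entirely inside the Corollary \ref{bit} framework it also wants for degree $\ge3$. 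One caveat, which you share with the paper rather than introduce: the singleton is the vertex value $a_0-a_1^2/(4a_2)$, which lies in $R$ only when $4a_2$ divides $a_1^2-4a_2a_0$ (automatic after the localization performed at the start of Section \ref{thirdsection}, but not in a general PID such as $\mathbb Z$, e.g.\ $f(x)=3x^2+x$ at $\mathfrak p=(5)$); your preimage formulation makes this visible, since the preimage of $0$ under $z\mapsto 4a_2z+D$ can be empty in $R$. The remaining verifications you flag (self-similarity, disjointness of the cosets, and that restricting to the progression $2a_2R+a_1$ costs nothing modulo every $\mathfrak p^k$) are indeed routine.
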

\begin{rem}
If $v_{\mathfrak p}(2)\ge1$ and $v_{\mathfrak p}(a_1)\ge1$, then $f'(x)\mod \mathfrak p$ is identically $0$.
This makes this case much more difficult than other cases. We consider this after the proof of Theorem \ref{quad}. 
\end{rem}
\begin{proof}[Proof of Theorem \ref{quad}]
As the constant term $a_0$ only causes an additive translation, we can assume $a_0=0$ without loss of generality.
Let $f(x)=a_2x^2+a_1x\in R[x]$. 
\begin{enumerate}
    \item[1.] First, we deal with the case $v_{\mathfrak p}(2a_2)=0$. We note that ${\rm char}(R)\neq2$. In this case, $f'(x)\equiv 0\mod \mathfrak p$ has only one solution $x\equiv -\frac{a_1}{2a_2}\mod\mathfrak p$.
Corollary \ref{bit} leads \[\overline{f(R)}^{\mathfrak p}=\bigsqcup_{a\in S_{f,\mathfrak p}}(a+\mathfrak p)\sqcup E,\]
where $E$ is caused by the unique solution $x\equiv -\frac{a_1}{2a_2}\mod\mathfrak p$.
Let $\alpha_{\mathfrak p}$ be an element in $R$ such that $2a_2\mid(\alpha_{\mathfrak p} \pi-a_1)$ and $\alpha_{\mathfrak p}\not\in\mathfrak p$. Then $f'(\frac{\alpha_{\mathfrak p}\pi-a_1}{2a_2})\equiv0\mod\mathfrak p$ and \[f\left(\frac{\alpha_{\mathfrak p }\pi-a_1}{2a_2}\right)=\frac{(\alpha_{\mathfrak p}\pi-a_1)^2}{4a_2}+\frac{\alpha_{\mathfrak p}\pi a_1-a_1^2}{2a_2}=\frac{\alpha_{\mathfrak p}^2\pi^2-a_1^2}{4a_2}.\]
One may confirm that \[f(x)\equiv \frac{\alpha_{\mathfrak p}^2\pi^2-a_1^2}{4a_2}\mod \mathfrak p\Longleftrightarrow x\equiv\left(\frac{\alpha_{\mathfrak p}\pi-a_1}{2a_2}\right)\mod\mathfrak p.\]
Let $s=\pi^2\beta_1+\frac{\alpha_{\mathfrak p}^2\pi^2-a_1^2}{4a_2}\in\overline{f(R)}^{\mathfrak p}$. Then 
\begin{align*}
f\left(\pi \beta_2+\frac{\alpha_{\mathfrak p}\pi-a_1}{2a_2}\right)&=\pi^2\beta_1+\frac{\alpha_{\mathfrak p}^2\pi^2-a_1^2}{4a_2}\\
a_2\beta_2^2+\alpha_{\mathfrak p}\beta_2&=\beta_1.
\end{align*}
To characterize $\beta_1$, we consider the polynomial $f_1(x)=a_2x^2+\alpha_{\mathfrak p}x\in R[x]$ and $\overline{f_1(R)}^{\mathfrak p}$. Since $v_{\mathfrak p}(2a_2)=0$, we can apply the same argument with the above and express $\overline{f_1(R)}^{\mathfrak p}=\bigsqcup_{a\in S_{f_1,\mathfrak p}}(a+\mathfrak p)\sqcup E_1$
and \[E= \pi^{2}\left(\bigsqcup_{a\in S_{f_1,\mathfrak p}}(a+\mathfrak p)\sqcup E_1\right)+\frac{\alpha_{\mathfrak p}^2\pi^2-a_1^2}{4a_2}.\] 
Let $\alpha_{\mathfrak p}'\in R$  such that $2a_2\mid(\alpha_{\mathfrak p}' \pi-\alpha_{\mathfrak p})$ and $\alpha_p'\not\in\mathfrak p$. 
Applying the same argument as above, we choose $f_2(x)=a_2x^2+\alpha_{\mathfrak p}'x\in R[x]$ as the role of $f_1$ in $f$ and get other elements in $E$. 
Accordingly, we can characterize $E$ inductively.
In this procedure, there is unique element $x\in R$ such that $f'(x)\equiv f'_i(x)\equiv0\mod \mathfrak p$ for all $i$.
Therefore, we add this element to $E$.
Also, $\#S_{f,\mathfrak p}=\#S_{f_1,\mathfrak p}=\cdots=\frac{\#\kappa-1}2$ and we obtain the first statement.
\item[2.] Next, we assume $v_{\mathfrak p}(a_2)\ge1$. In this case, $f'(x)\equiv 0\mod\mathfrak p$ has no solution and if $f(x)\equiv f(y)\mod\mathfrak p$ then $(a_2(x+y)+a_1)(x-y)\equiv0\mod\mathfrak p$. Therefore, $f(x)\equiv f(y)\mod\mathfrak p$ if and only if $x\equiv y\mod\mathfrak p$ and for any $b\in \{0,c_2,\ldots,c_N\}$, there exists $a$ such that $f(a)\equiv b\mod\mathfrak p$. By Corollary \ref{bit}, it holds that $\overline{f(R)}^{\mathfrak p}=R$.
\item[3.]We assume $v_{\mathfrak p}(2)\ge1$ and $v_{\mathfrak p}(a_1a_2)=0$. In this case, we can also confirm that $f'(x)\equiv 0\mod\mathfrak p$ has no solution. Therefore, by Corollary \ref{bit} and the fact $\#S_{f,\mathfrak p}=\frac{\#\kappa}2=N$, we find that \[\overline{f(R)}^{\mathfrak p}=\bigsqcup_{i=1}^N(\alpha_{k,i}+\mathfrak p).\]
\end{enumerate}
This completes the proof of this theorem.
\end{proof}
In the following, we consider $\mathfrak p$-closure $\overline{f(R)}^{\mathfrak p}$ in the left case that $v_{\mathfrak p}(2)\ge1$ and $v_{\mathfrak p}(a_1)\ge1$. Shifting $x$, we can assume that $v_{\mathfrak p}(a_1)=v_{\mathfrak p}(2)$.
As $v_{\mathfrak p}(a_1)=v_{\mathfrak p}(2)\ge1$, we have $f'(x)\mod \mathfrak p$ is identically $0$. We consider elements of the form $\pi^{2v_{\mathfrak p}(a_1)}\beta_1$, which are caused by $f(0)\equiv0\mod\mathfrak p^{2v_{\mathfrak p}(a_1)}$. For $k\ge 2v_{\mathfrak p}(a_1)+1$
\begin{align*}
f(\pi^{v_{\mathfrak p}(a_1)} \beta_2)&\equiv \pi^{2v_{\mathfrak p}(a_1)}\beta_1\mod \mathfrak p^k\\
\pi^{2v_{\mathfrak p}(a_1)}a_2\beta_2^2+\pi^{v_{\mathfrak p}(a_1)} a_1\beta_2&\equiv \pi^{2v_{\mathfrak p}(a_1)}\beta_1\mod \mathfrak p^k\\
a_2\beta_2^2+\frac{a_1}{\pi^{v_{\mathfrak p}(a_1)}}\beta_2&\equiv \beta_1\mod \mathfrak p^{k-2v_{\mathfrak p}(a_1)}.
\end{align*}
Let $h(x)=a_2x^2+\frac{a_1}{\pi^{v_{\mathfrak p}(a_1)}}x$.
From the assumption, $v_{\mathfrak p}(\frac{a_1}{\pi^{v_{\mathfrak p}(a_1)}})=0$. Thus, we find \[\overline{h(R)}^{\mathfrak p}=\pi^{2v_{\mathfrak p}(a_1)}R\]
by Theorem \ref{quad}.
When we deal with other elements of the form $\pi^{2v_{\mathfrak p}(a_1)}\beta_1+a_2c_i^2+a_1c_i$ caused from $f(c_i)$, we need to estimate $v_{\mathfrak p}(2a_2c_i+a_1)$ for each $c_i\ (2\le i\le N)$ and use former discussion.
Therefore, we have \[\overline{f(R)}^{\mathfrak p}=\pi^{2v_{\mathfrak p}(a_1)}R\cup E,\]
where $E$ is caused by the elements of the form $\pi^{2v_{\mathfrak p}(a_1)}\beta_1+a_2c_i^2+a_1c_i$.
The fact that $f'(x)\mod \mathfrak p$ is identically $0$ makes dealing with $E$ very difficult.

\section{Bhargava factorial for integer coefficient quadratic polynomials}
\label{stirlingsec}
In Theorem \ref{quad}, we considered the $\mathfrak p$-adic closure $\overline{f(R)}^{\mathfrak p}$. However, it is difficult to characterize $\overline{f(R)}^{\mathfrak p}$ for all $\mathfrak p$. Therefore, we focus on the case $R=\mathbb Z$. 
In this situation, we replace prime ideals $p\mathbb Z$ with primes $p$ and use abbreviation $l!_{f(\mathbb Z)}$ for $l!_{f(\mathbb Z)}^{\mathbb Z}$. First, we present examples of $\overline{f(\mathbb Z)}^p$.
\begin{exam}
\label{example}
Let $s(x)=x^2$. For any odd prime $p$,
\[\overline{s(\mathbb Z)}^p=\{0\}\sqcup\bigsqcup_{k=0}^{\infty}\bigsqcup_{\substack{\left(\frac rp\right)=1\\1\le r\le p-1}}(rp^{2k}+p^{2k+1}\mathbb Z),\]
where $\left(\frac rp\right)$ is the Legendre symbol modulo $p$ defined by
\[\left(\frac rp\right)=\left\{\begin{array}{cl}
    1 &  \text{ if there exists $p\nmid a$ with $a^2\equiv r\mod p$},\\
     -1&\text{ if for any $a\in\mathbb Z$, $a^2\not\equiv r\mod p$},\\
     0 &\text{ if $r\equiv0\mod p$}.
\end{array}\right.\]
Also, if $p=2$, it holds that
\[\overline{s(\mathbb Z)}^2=\{0\}\sqcup\bigsqcup_{k=0}^{\infty}(2^{2k}+2^{2k+3}\mathbb Z).\]
\end{exam}
We first show the former case in the last example by using Theorem \ref{quad}.
\begin{proof}[Proof of the first assertion of Example \ref{example}]
For fixed odd prime $p$, the solution to the congruence $2x\equiv 0\mod p$ is $x\equiv 0\mod p$ and 
\[S_{s,p}=\left\{r\mod p~\left|~\left(\frac rp\right)=1\right\}\right..\]
By Theorem \ref{quad}, we have
\[\overline{s(\mathbb Z)}^p=\bigsqcup_{\substack{\left(\frac rp\right)=1\\1\le r\le p-1}}(r+p\mathbb Z)\sqcup E.\]
We next consider the case $p|s(x)$, which corresponds to $E$, that is, $x\equiv0\mod p$.
Letting $x=pk$, we find that $p^2|s(pk)=p^2k^2$.
It suffices to characterize the $p$-adic closure $\overline{s_1(\mathbb Z)}$ for $s_1(x)=\frac{s(pk)}{p^2}$. Since $s(x)=s_1(x)$, by the same argument as above, we find 
\[E=\overline{s_1(\mathbb Z)}^p=\bigsqcup_{\substack{\left(\frac rp\right)=1\\1\le r\le p-1}}(r+p\mathbb Z)\sqcup E_1,\]
and 
\[\overline{s(\mathbb Z)}^p=\bigsqcup_{k=0}^1\bigsqcup_{\substack{\left(\frac rp\right)=1\\1\le r\le p-1}}(rp^{2k}+p^{2k+1}\mathbb Z)\sqcup p^2E_1.\]
We can apply the same argument repeatedly with $p^2s_{i+1}(x)=s_i(px)$ and we note that $s(0)=s_i(0)=0$ for any $i\ge1$.
Therefore, we have
\[\overline{s(\mathbb Z)}^p=\{0\}\sqcup\bigsqcup_{k=0}^{\infty}\bigsqcup_{\substack{\left(\frac rp\right)=1\\1\le r\le p-1}}(rp^{2k}+p^{2k+1}\mathbb Z).\]
\end{proof}

The latter case of Example \ref{example} is obtained by the following lemma.
\begin{lem} 
\label{oddeven}
Let $f(x)=a_2x^2+a_1x+a_0\in\mathbb Z[x]$ with $\gcd(a_1,a_2)=1$ and $2|a_1$. Then
\[
\overline{f(\mathbb Z)}^2=\{a_0+\alpha\}\sqcup\bigsqcup_{k=0}^\infty (a_0+\alpha_{k}+2^{2k+3}\mathbb Z)\]
with $\alpha,\alpha_k\in\mathbb Z$.
\end{lem}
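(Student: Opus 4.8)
The plan is to reduce the computation to the single map $y\mapsto y^{2}$ by completing the square, and then to describe the $2$-adic squares explicitly. I will work in the $2$-adic completion $\mathbb Z_2$ of $\mathbb Z$: the $2$-sequence depends only on the residues of $f(\mathbb Z)$ modulo every power of $2$, which are exactly the data recorded by the $2$-adic closure (this is the content of Lemma \ref{clo} together with \eqref{loc}), so passing to $\mathbb Z_2$ is harmless. Since $\mathbb Z$ is dense in $\mathbb Z_2$ and $f$ is continuous, $f(\mathbb Z)$ is dense in $f(\mathbb Z_2)$; and $f(\mathbb Z_2)$, being the continuous image of the compact set $\mathbb Z_2$, is closed. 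Hence $\overline{f(\mathbb Z)}^{2}=f(\mathbb Z_2)$, and it suffices to compute the right-hand side.

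First I would record that the hypotheses force $a_2$ to be a $2$-adic unit: since $\gcd(a_1,a_2)=1$ and $2\mid a_1$, we have $2\nmid a_2$. Writing $a_1=2b_1$, completing the square gives
\[ a_2\bigl(f(x)-a_0\bigr)=(a_2x+b_1)^2-b_1^2, \qquad\text{i.e.}\qquad f(x)=a_0+a_2^{-1}\bigl((a_2x+b_1)^2-b_1^2\bigr), \]
valid in $\mathbb Z_2$ because $a_2\in\mathbb Z_2^{\times}$. As $x$ runs over $\mathbb Z_2$ the quantity $y=a_2x+b_1$ runs over all of $\mathbb Z_2$, so
\[ f(\mathbb Z_2)=\psi\bigl(\{y^2:y\in\mathbb Z_2\}\bigr),\qquad \psi(t)=a_0+a_2^{-1}\bigl(t-b_1^2\bigr). \]
Note that this bypasses the shift used after Theorem \ref{quad}. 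The map $\psi$ is an affine $2$-adic homeomorphism whose linear part $a_2^{-1}$ is a unit; it therefore carries a ball $c+2^{m}\mathbb Z_2$ to the ball $\psi(c)+2^{m}\mathbb Z_2$ of the same radius and sends points to points, so it preserves the shape of the answer.

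It remains to determine the set of $2$-adic squares, and this is the crux — the place where $p=2$ genuinely differs from the odd case treated in Example \ref{example}. For $y\neq0$ write $y=2^{k}u$ with $u\in\mathbb Z_2^{\times}$, so $y^{2}=2^{2k}u^{2}$; in particular every nonzero square has even valuation. The key local fact is that $\{u^{2}:u\in\mathbb Z_2^{\times}\}=1+8\mathbb Z_2$: every odd square is $\equiv1\pmod 8$, and conversely any unit $\equiv1\pmod 8$ has a square root by Hensel's lemma (this is why the modulus is $8=2^{3}$ rather than $2$). Consequently
\[ \{y^2:y\in\mathbb Z_2\}=\{0\}\sqcup\bigsqcup_{k=0}^{\infty}\bigl(2^{2k}+2^{2k+3}\mathbb Z_2\bigr), \]
the union being disjoint because its $k$-th piece consists of squares of valuation exactly $2k$, and $0$ lies in none of them.

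Finally I would apply $\psi$ termwise. The point $0$ maps to the single point $a_0-a_2^{-1}b_1^{2}$ (which is $f$ evaluated at the $2$-adic critical point $-b_1/a_2$), and the ball $2^{2k}+2^{2k+3}\mathbb Z_2$ maps to $\bigl(a_0+a_2^{-1}(2^{2k}-b_1^{2})\bigr)+2^{2k+3}\mathbb Z_2$. Setting $\alpha=-a_2^{-1}b_1^{2}$ and choosing, for each $k$, an integer representative $\alpha_k\equiv a_2^{-1}(2^{2k}-b_1^{2})\pmod{2^{2k+3}}$ yields exactly
\[ \overline{f(\mathbb Z)}^2=\{a_0+\alpha\}\sqcup\bigsqcup_{k=0}^{\infty}\bigl(a_0+\alpha_k+2^{2k+3}\mathbb Z\bigr). \]
The main obstacle is the square characterization just highlighted: at $p=2$ one cannot read off membership from a single residue as in Corollary \ref{bit}, and it is precisely the passage to the modulus $8$ that produces the radius $2^{2k+3}$. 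A minor point worth flagging is that the isolated point $a_0+\alpha$ is a priori only a $2$-adic integer (it need not lie in $\mathbb Z$ unless $a_2=\pm1$); since the $2$-sequence depends only on the closure in $\mathbb Z_2$, this does not affect the Bhargava factorial.
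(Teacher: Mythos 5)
Your proof is correct, but it takes a genuinely different route from the paper's. The paper never leaves $\mathbb Z$: after normalizing $a_0=0$ and shifting $x$ so that $4\mid a_1$, it splits inputs by parity, writes $f(2k+1)=4\bigl(a_2k^2+(a_2+\tfrac{a_1}{2})k\bigr)+a_2+a_1$ and applies Theorem \ref{quad}(3) to the inner quadratic (both coefficients odd) to get $\overline{f(2\mathbb Z+1)}^2=a_2+a_1+8\mathbb Z$; then it observes that $f(2k+\alpha_1)$ is $4$ times a quadratic of the \emph{same shape} as $f$ plus a constant, so the balls of radius $2^{2k+3}$ are produced one at a time by this self-similar recursion, the isolated point arising as the limit of the recursion centers. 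You instead pass to $\mathbb Z_2$, identify the closure with $f(\mathbb Z_2)$ by compactness, complete the square (this is exactly where $2\mid a_1$ gives $b_1=a_1/2\in\mathbb Z$ and $\gcd(a_1,a_2)=1$ makes $a_2$ a unit), and quote $(\mathbb Z_2^\times)^2=1+8\mathbb Z_2$. Your version is shorter, makes the exponent $2k+3$ conceptually transparent (it is the index of the squares in the units), and identifies the isolated point as the critical value $f(-b_1/a_2)$; the paper's version stays elementary, keeps all data integral at every stage, and reuses its Theorem \ref{quad} machinery. Your closing caveat is well taken: the critical value $a_0-a_2^{-1}b_1^2$ need not lie in $\mathbb Z$ (e.g.\ $f(x)=3x^2+4x$), so as a closure \emph{in} $\mathbb Z$ the singleton may be absent --- but note this imprecision is already present in the paper's own statement and inductive proof (its centers converge only in $\mathbb Z_2$), and, as you say, it is harmless for the factorial computations in Theorem \ref{pord}.
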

\begin{proof}
As in the proof of Theorem \ref{quad}, we can assume $a_0=0$.
Also, by the assumption, we find $2a_2+a_1\not \equiv a_1\mod 4$ and we can assume $a_1$ is a multiple of $4$ without loss of generality.
For any integer $k$, 
\[f(2k+1)=4\left(a_2k^2+\left(a_2+\frac{a_1}{2}\right)k\right)+a_2+a_1.\]
By the assumption, one can confirm that $a_2$ and $a_2+\frac{a_1}{2}$ are odd integers. By Theorem \ref{quad}, we have $\overline{f(2\mathbb Z+1)}^2=8\mathbb Z+(a_2+a_1)$. 

Next, we deal with $f(2\mathbb Z)$. We take an integer $\alpha_1$ such that $2a_2\alpha_1\equiv a_1\mod 8$, then
\[f(2k+\alpha_1)=4\left(a_2k^2+\left(a_2\alpha_1+\frac{a_1}{2}\right)k\right)+a_2\alpha_1^2+a_1\alpha_1.\]
As $a_2$ is an odd and $a_2\alpha_1+\frac{a_1}{2}$ is a multiple of $4$, we can apply the same argument as above and obtain the assertion,
\[
\overline{f(\mathbb Z)}^2=\{\alpha\}\sqcup(8\mathbb Z+a_2+a_1)\sqcup\bigsqcup_{k=1}^\infty (\alpha_{k}+2^{2k+3}\mathbb Z)\]
with $\alpha,\alpha_k\in\mathbb Z$,
inductively. This proves the lemma.
\end{proof}
\begin{thm}
\label{pord}
Let $f(x)=g(a_2x^2+a_1x)+a_0\in\mathbb Z[x]$ with $g\ge1$ and $\gcd(a_1,a_2)=1$. Then
\[l!_{f(\mathbb Z)}=g^l\prod_{p|a_2}p^{k(p,l)}\prod_{p\nmid a_2}p^{k(p,2l)}2^{-\delta_{\{2|a_1, 2\nmid a_2\}}},\]
where \[k(p,l)=\sum_{k=1}^\infty\left[\frac l{p^k}\right]\] and $\delta_{P}$ means that if $P$ is true then $\delta_P=1$, and otherwise $\delta_P=0$.
\end{thm}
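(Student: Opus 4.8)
The plan is to reduce $l!_{f(\mathbb Z)}$ to a prime-by-prime computation and then read off each exponent from the closures produced in Theorem \ref{quad}, Example \ref{example} and Lemma \ref{oddeven}. First I would invoke the factorization $l!_{f(R)}=\mathfrak g_f^l\prod_{\mathfrak p}l!_{f(R,\mathfrak p)}$ from Section \ref{thirdsection}. Writing $\tilde f(x)=a_2x^2+a_1x$, the coefficients of $f$ are $ga_2$ and $ga_1$, so $\gcd(a_1,a_2)=1$ gives $v(p,f)=v_p(g)$ for every prime $p$ and hence $\mathfrak g_f=(g)$, which produces the factor $g^l$. Since $a_0$ only translates the set and the prime-to-$p$ part of $g$ is a $p$-adic unit, neither changes the $p$-sequence, so by \eqref{loc} and Lemma \ref{clo} it remains to prove, for each prime $p$,
\[
v_p(l;\tilde f(\mathbb Z))=\begin{cases}k(p,l)&p\mid a_2,\\ k(p,2l)&p\nmid a_2,\ p\text{ odd},\\ k(p,2l)-\delta_{\{2\mid a_1\}}&p=2\nmid a_2.\end{cases}
\]

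The case $p\mid a_2$ (so $p\nmid a_1$) is immediate: Theorem \ref{quad}(2) gives $\overline{\tilde f(\mathbb Z)}^{\,p}=\mathbb Z$, whence $v_p(l;\tilde f(\mathbb Z))=v_p(l!)=k(p,l)$ by Lemma \ref{clo}.

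When $p\nmid a_2$ I would complete the square, $\tilde f(x)=a_2\bigl(x+\tfrac{a_1}{2a_2}\bigr)^2-\tfrac{a_1^2}{4a_2}$. The shift $\tfrac{a_1}{2a_2}$ lies in $R_{\mathfrak p}$ whenever $p$ is odd, and also when $p=2$ provided $2\mid a_1$; in those cases the closure of $\tilde f(\mathbb Z)$ is a unit multiple of the set $\Sigma_p=\overline{s(\mathbb Z)}^{\,p}$ of $p$-adic squares from Example \ref{example}, up to an additive constant, so $v_p(l;\tilde f(\mathbb Z))=v_p(l;s(\mathbb Z))$. The key computation is then the $p$-sequence of the squares, for which I would take the candidate $p$-ordering $(n^2)_{n\ge0}$ and evaluate
\[
v_p\Bigl(\prod_{k=0}^{n-1}(n^2-k^2)\Bigr)=v_p(n)+v_p\bigl((2n-1)!\bigr)=v_p\bigl((2n)!\bigr)-v_p(2),
\]
which equals $k(p,2l)$ for odd $p$ and $k(2,2l)-1$ for $p=2$, matching $\delta_{\{2\mid a_1\}}=1$. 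The single remaining configuration, $p=2\nmid a_2$ with $2\nmid a_1$, is not reachable by completing the square over $R_{(2)}$; here $\tilde f(\mathbb Z)\subset2\mathbb Z$ and Theorem \ref{quad}(3) forces the closure to be $2\mathbb Z$, so scaling by $2$ yields $v_2(l;\tilde f(\mathbb Z))=v_2(l!)+l=k(2,2l)$ with no correction, consistent with $\delta=0$. Reassembling over all primes recovers the stated formula, the accumulated $-v_2(2)$ contribution collapsing into the factor $2^{-\delta_{\{2\mid a_1,\,2\nmid a_2\}}}$.

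The main obstacle is the assertion that $(n^2)_{n\ge0}$ is a genuine $p$-ordering, i.e.\ that $m=n$ minimizes $\sum_{k=0}^{n-1}v_p(m^2-k^2)$ over admissible $m$; the displayed value is the correct $p$-sequence only once this optimality is confirmed. I expect to settle it either by a direct minimization of $v_p\bigl(\tfrac{m!\,(m+n-1)!}{(m-n)!\,(m-1)!}\bigr)$ for $m\ge n$, or, more in the spirit of the present methods, by exploiting the self-similarity $\Sigma_p=\{0\}\sqcup B\sqcup p^2\Sigma_p$ of Example \ref{example}, with $B$ the union of full balls over the nonzero quadratic residues, to set up a recursion for the $p$-sequence and verify that $k(p,2n)-v_p(2)$ satisfies it. A final routine point is the convention at $l=0$, where every factorial is trivial and the correction factor is understood to act only for $l\ge1$.
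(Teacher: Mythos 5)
Your proposal is correct in substance and follows the same skeleton as the paper's proof: factor out $g$ via $\mathfrak g_f=(g)$, then compute $v_p(l;\cdot)$ prime by prime, using Theorem \ref{quad}(2) to get $\overline{f(\mathbb Z)}^p=\mathbb Z$ when $p\mid a_2$, reducing to the squares $s(x)=x^2$ when $p\nmid a_2$, and using the closure $2\mathbb Z$ from Theorem \ref{quad}(3) when $p=2\nmid a_1a_2$. The genuine divergence is in how you compare $f(\mathbb Z)$ with $s(\mathbb Z)$. The paper invokes Theorem \ref{quad}(1) (for odd $p$) and proves Lemma \ref{oddeven} (for $p=2$, $2\mid a_1$) to exhibit a structure-matching bijection $\phi:\overline{f(\mathbb Z)}^p\to\overline{s(\mathbb Z)}^p$, $\alpha_{k,i}\mapsto r_ip^{2k}$, whose valuation-preserving property is left implicit. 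You instead complete the square: since $a_1/(2a_2)\in\mathbb Z_{(p)}$ for odd $p\nmid a_2$, and also for $p=2$ when $2\mid a_1$ and $2\nmid a_2$, the closure $\overline{f(\mathbb Z)}^p$ is the affine image $a_2\Sigma_p - a_1^2/(4a_2)+a_0$ of the closure $\Sigma_p$ of the squares (using density of $\mathbb Z+a_1/(2a_2)$ in $\mathbb Z_p$ and continuity of squaring, which you should state explicitly), and such unit-scale affine maps manifestly preserve $p$-sequences. This is cleaner than the paper's argument: it makes the isometry explicit and it subsumes Lemma \ref{oddeven} entirely. Your one acknowledged gap --- that $(n^2)_{n\ge0}$ is a genuine $p$-ordering of $s(\mathbb Z)$, so that $v_p(l;s(\mathbb Z))=v_p\bigl((2l)!\bigr)-v_p(2)$ (your computation $v_p\bigl(\prod_{k=0}^{l-1}(l^2-k^2)\bigr)=v_p\bigl(l\cdot(2l-1)!\bigr)$ is correct) --- is precisely the fact the paper disposes of by citation: $l!_{s(\mathbb Z)}=(2l)!/2$ is quoted from \cite{bh00}. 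So your ``main obstacle'' dissolves if you simply cite that known result rather than reprove it; if you insist on self-containedness, the self-similarity recursion you sketch (based on the structure in Example \ref{example}) would work but must actually be carried out. With that citation inserted, your argument is complete and, at the comparison step, more rigorous than the original.
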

\begin{proof}
It suffices to prove the case $g=1$.
First we consider $v_p(l!_{f(\mathbb Z)})$ for prime $p\mid a_2$. From Theorem \ref{quad}, we find $\overline{f(\mathbb Z)}^p=\mathbb Z$. Therefore, it holds that $v_p(l!_{f(\mathbb Z)})=v_p(l!)=k(p,l)$. 
Next, we deal with odd prime $p\nmid a_2$. It is known that $l!_{s(\mathbb Z)}=\frac{(2l)!}2$ for $s(x)=x^2$ \cite{bh00}. From Theorem \ref{quad}, we can calculate the $p$-adic closure of $f(\mathbb Z)$ as
\[
\overline{f(\mathbb Z)}^p=\{\alpha\}\sqcup\bigsqcup_{k=0}^\infty \bigsqcup_{i=1}^{\frac{p-1}2}(\alpha_{k,i}+p^{2k+1}\mathbb Z)\]
for odd prime $p$. As $\#\{f(a)\mod p ~|~f'(a)\not\equiv0\mod p\}=\#\{s(a)\mod p ~|~s'(a)\not\equiv0\mod p\}=\frac{p-1}2$, there exists a bijection $\phi:\overline{f(\mathbb Z)}^p\longrightarrow \overline{s(\mathbb Z)}^p$ such that $\alpha\longmapsto 0$ and $\alpha_{k,i}\longmapsto r_ip^{2k}$, where $\{r_1,\ldots,r_{\frac{p-1}2}\}$ is the set of all quadratic residues mod $p$. Therefore, $v_p(l!_{f(\mathbb Z)})=v_p(l!_{s(\mathbb Z)})$ and $v_p(l!_{f(\mathbb Z)})=v_p((2l)!)$ for any odd prime $p\nmid a_2$.

Finally, we consider $v_2(l!_{f(\mathbb Z)})$ under the assumption $a_2$ is odd. When $a_1$ is even, Lemma \ref{oddeven} gives 
\[
\overline{f(\mathbb Z)}^2=\{\alpha\}\sqcup\bigsqcup_{k=0}^\infty (\alpha_{k}+2^{2k+3}\mathbb Z)\]
with $\alpha,\alpha_k\in\mathbb Z$.
Then, by a bijection $\phi:\overline{f(\mathbb Z)}^2\longrightarrow \overline{s(\mathbb Z)}^2$ such that $\alpha\longmapsto 0$ and $\alpha_{k}\longmapsto 2^{2k}$, we find that $v_2(l!_{f(\mathbb Z)})=v_2(l!_{s(\mathbb Z)})=v_2((2l)!)-1$.  Next, we assume $a_1$ is odd, that is, $2\nmid a_2a_1$. Theorem \ref{quad} gives $\overline{f(\mathbb Z)}^2=2\mathbb Z$. It is known that the natural ordering $0,2,4,6,\ldots$ forms a $2$-ordering of $2\mathbb Z$ (see \cite{bh00}). Therefore, $v_2(l!_{f(\mathbb Z)})=v_2((2l)!)$.
We obtain the desired conclusion.
\end{proof}
If $|a_2|\le2$ then we can compute $l!_{f(\mathbb Z)}$ directly and obtain the following explicit formulas.
When $|a_1|=1$ then
\[l!_{f(\mathbb Z)}=\left\{\begin{array}{ll}
g^l(2l)!&2\nmid a_1,\\
\frac{g^l(2l)!}2& 2|a_1,
\end{array}\right.\]
and if $|a_2|=2$ then $l!_{f(\mathbb Z)}=g^l(2l)!2^{-l}$. In the followings, we present an estimate of the quotient $l!_{f(\mathbb Z)}$ by $g^l(2l!)$.
\begin{prop}
For any quadratic polynomial $f(x)=g(a_2x^2+a_1x)+a_0$ with $g\ge1$ and $\gcd(a_1,a_2)=1$, it holds that
\[\frac12\prod_{p|a_2} p^{-\frac{2l}p}\le\frac{l!_{f(\mathbb Z)}}{g^l (2l)!}\le \prod_{p|a_2} p^{-\frac{l}p+1}.\]
Moreover, we find that $\log l!_{f(\mathbb Z)}\sim 2l\log 2l$ by using the classical Stirling formula.
\end{prop}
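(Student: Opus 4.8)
The plan is to read everything off the exact formula of Theorem \ref{pord} and then reduce the inequalities to one elementary estimate about $p$-adic valuations of factorials. Write $\delta=\delta_{\{2\mid a_1,\,2\nmid a_2\}}$ and recall Legendre's formula $v_p((2l)!)=k(p,2l)$, so that $(2l)!=\prod_p p^{k(p,2l)}$. Dividing the formula of Theorem \ref{pord} by $g^l(2l)!$, the factors attached to primes $p\nmid a_2$ cancel, leaving the clean identity
\[\frac{l!_{f(\mathbb Z)}}{g^l(2l)!}=2^{-\delta}\prod_{p\mid a_2}p^{\,k(p,l)-k(p,2l)}.\]
Everything then comes down to estimating the nonnegative exponents $k(p,2l)-k(p,l)=\sum_{j\ge1}\big(\lfloor 2l/p^j\rfloor-\lfloor l/p^j\rfloor\big)$, together with the trivial bounds $\tfrac12\le 2^{-\delta}\le 1$.

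For the upper bound I would use the easy direction. Since $\lfloor 2l/p^j\rfloor\ge 2\lfloor l/p^j\rfloor$ term by term, one gets $k(p,2l)-k(p,l)\ge k(p,l)\ge\lfloor l/p\rfloor>l/p-1$. Hence $p^{\,k(p,l)-k(p,2l)}\le p^{\,1-l/p}$ for every $p\mid a_2$, and with $2^{-\delta}\le1$ this yields $l!_{f(\mathbb Z)}/(g^l(2l)!)\le\prod_{p\mid a_2}p^{\,1-l/p}$.

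The lower bound rests on the single inequality $k(p,2l)-k(p,l)\le 2l/p$ for every prime $p$, since then $p^{\,k(p,l)-k(p,2l)}\ge p^{-2l/p}$ for each $p\mid a_2$, and combining with $2^{-\delta}\ge\tfrac12$ produces the claimed $\tfrac12\prod_{p\mid a_2}p^{-2l/p}$. For $p=2$ the sum telescopes,
\[k(2,2l)-k(2,l)=\sum_{j\ge1}\big(\lfloor l/2^{j-1}\rfloor-\lfloor l/2^j\rfloor\big)=l=\frac{2l}{p},\]
so the inequality is an equality; the $p=2$ factor (present exactly when $2\mid a_2$, in which case $\delta=0$) is thus handled with equality in the lower bound and slack in the upper bound. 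The odd-prime case is the main obstacle: the naive estimate $\lfloor 2l/p^j\rfloor-\lfloor l/p^j\rfloor\le l/p^j+1$, summed geometrically, only gives $k(p,2l)-k(p,l)\le \tfrac{l}{p-1}+O(\log_p l)$, and although $\tfrac{l}{p-1}\le\tfrac{2l}{p}$ for all $p\ge2$, the logarithmic error term spoils the bound for small $l$ and small $p$ (e.g. $p=3$, $l=2$, where the ratio of the two sides is already as close as $3/4$). I would instead rewrite the exponent via Kummer's theorem as $k(p,2l)-k(p,l)=k(p,l)+v_p\binom{2l}{l}=\tfrac{l+s_p(l)-s_p(2l)}{p-1}$, where $s_p$ denotes the base-$p$ digit sum, reducing the claim to the digit-sum inequality $p\,(s_p(l)-s_p(2l))\le(p-2)\,l$. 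This I would prove by tracking the carries in the base-$p$ addition $l+l$ (each carry lowers the total digit sum by $p-1$), with the finitely many small-$l$, small-$p$ configurations checked directly. This carry bookkeeping is where the real work lies.

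Finally, the asymptotic statement follows at once from the two-sided bound. Taking logarithms, $\log\big(l!_{f(\mathbb Z)}/(g^l(2l)!)\big)$ lies between $-\log2-\sum_{p\mid a_2}\tfrac{2l}{p}\log p$ and $\sum_{p\mid a_2}(1-\tfrac lp)\log p$, both of which are $O(l)$ because only finitely many primes divide $a_2$. Hence $\log l!_{f(\mathbb Z)}=\log(2l)!+l\log g+O(l)$, and the classical Stirling formula $\log(2l)!=2l\log(2l)-2l+O(\log l)$ shows that the leading term is $2l\log(2l)$ while every correction is $O(l)=o(2l\log 2l)$, giving $\log l!_{f(\mathbb Z)}\sim 2l\log 2l$.
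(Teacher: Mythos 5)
Your proposal takes the same route as the paper: both divide the formula of Theorem \ref{pord} by $g^l(2l)!$ to get
$\frac{l!_{f(\mathbb Z)}}{g^l(2l)!}=2^{-\delta}\prod_{p\mid a_2}p^{k(p,l)-k(p,2l)}$ with $\delta=\delta_{\{2\mid a_1,\,2\nmid a_2\}}$, then bound the exponents $k(p,2l)-k(p,l)$, and finally feed the two-sided bound into the classical Stirling formula. The difference is in what is actually proved about those exponents: the paper simply asserts the estimate $\frac{l}{p}-1\le k(p,2l)-k(p,l)\le\frac{2l}{p}$ with no justification, whereas you prove the first inequality (the easy direction, term by term from $\lfloor 2x\rfloor\ge2\lfloor x\rfloor$) and correctly single out the second as the genuine content, observing that naive term-by-term bounds lose an $O(\log_p l)$ error that ruins the estimate for small $l$ (your $p=3$, $l=2$ example is apt). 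Your Legendre--Kummer reduction to the digit-sum inequality $p\bigl(s_p(l)-s_p(2l)\bigr)\le(p-2)l$, where $s_p$ is the base-$p$ digit sum, is correct, and the carry bookkeeping you leave open does close without any finite case-checking: if $c\ge1$ is the number of carries in the base-$p$ addition $l+l$ (the case $c=0$ is trivial), then $s_p(l)-s_p(2l)=(p-1)c-s_p(l)$; the carry positions are distinct, each holds a digit at least $\frac{p-1}{2}$, and the lowest holds a digit at least $\frac{p+1}{2}$, whence $s_p(l)\ge\frac{(p-1)c}{2}+1$ and $l\ge\frac{p-1}{2}p^{c-1}$, so the claim reduces to $p\bigl((p-1)c-2\bigr)\le(p-2)(p-1)p^{c-1}$, which is immediate for $c=1,2$ and holds for $c\ge3$ because the right side grows geometrically in $c$. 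So, modulo that bookkeeping, your argument is sound, and it is in fact more complete than the paper's own proof, which is valid only after granting the unproved inequality you isolated; your handling of the asymptotic statement ($O(l)$ corrections against the main term $2l\log 2l$) is exactly what the paper intends by ``using the classical Stirling formula.''
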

\begin{proof}
By Theorem \ref{pord}, we find \[\frac{l!_{f(\mathbb Z)}}{g^l (2l)!}=\prod_{p|a_2} p^{k(p,l)-k(p,2l)}2^{-\delta_{\{2|a_1, 2\nmid a_2\}}}.\]
Since $\frac lp-1\le k(p,2l)-k(p,l)\le \frac{2l}p$ and $-\delta_{\{2|a_1, 2\nmid a_2\}}\in\{0,1\}$, it holds that
\[\frac12\prod_{p|a_2} p^{-\frac{2l}p}\le\frac{l!_{f(\mathbb Z)}}{g^l (2l)!}\le \prod_{p|a_2} p^{-\frac{l}p+1}.\]
\end{proof}
When $\deg f\ge3$, this problem becomes much difficult. As an example,
we consider the Bhargava factorial for $S(n)=f(\mathbb Z)$ with $f(x)=x^n$ and $n\ge3$. For any prime $p\nmid n$ we find that $f'(x)\equiv0\mod p$ if and only if $p|x$. Since $f(x)=x^n$ is an endomorphism of $(\mathbb Z/p\mathbb Z)^\times$, we have $\#\Ima f=\#S_{f,p}=\frac{p-1}{g}$, where $g=\gcd(p-1,n)$. From Corollary \ref{bit}, we find
\[\overline{S(n)}^p\supset\bigsqcup_{a\in S_{f,p}}a+p\mathbb Z.\]
By computing the $p$-factor of the Bhargava factorial directly, we obtain the following estimate
\begin{equation}
\label{estimate} 
v_p(l!_{S(n)})<\sum_{k=1}^\infty\frac{gl}{p^{k-1}(p-1)}.    
\end{equation}
Also, Fares and Johnson \cite{fj12} proved that for the primes $p$ with $n|(p-1)$, 
\begin{equation}
\label{estimate2} 
v_p(l!_{S(n)})=v_p((nl)!).
\end{equation}
Let $P_n$ be the set of all primes $p$ with $n|(p-1)$ and let $Q_n$ be the set of all primes $p$ with $\gcd(p-1,n)\le2$.
By inequality (\ref{estimate}) and identity (\ref{estimate2}), we can observe the difference of the behavior of $v_p(l!_{S(n)})$ between primes in $P_n$ and these in $Q_n$. As both $P_n$ and $Q_n$ are infinite sets, it is difficult to give analytic estimates for $l!_{S(n)}$. 
For example, identity (\ref{estimate2}) leads that $v_p(l!_{S(3)})=v_p((3l)!)$ for any prime $p\in P_3$.
On the other hand, for any odd prime $p\in Q_3$, we calculate
\[
v_p(l!_{S(3)})<\sum_{k=1}^\infty\frac{l}{p^{k-1}(p-1)}\nonumber\le\sum_{k=1}^\infty\frac{2l}{p^{k}}=v_p((2l)!).
\]
Both $v_p(l!_{S(3)})=v_p((3l)!)$ and $v_p(l!_{S(3)})<v_p((2l)!)$ hold at infinitely many primes. We note that a polynomial $f(x)$ is not an endomorphism of $(\mathbb Z/p\mathbb Z)^\times$ in general. Therefore, when $\deg f\ge3$, this is much more difficult to give analytic estimates for $l!_{f(\mathbb Z)}$ than the former case.
\section*{Acknowledgement}
This work was supported by Grant-in-Aid for JSPS Research Fellow (Grant Number: JP19J10705).


\begin{thebibliography}{99}
\bibitem{ac15} Adam, D., Chabert, J. L. (2015). On Bhargava's factorials of the set of twin primes in $\mathbb Z$ and in $\mathbb F_q [T]$. International Journal of Number Theory {\bf 11}, no.6, 1941--1959.
\bibitem{bh97} Bhargava, M. (1997). P-orderings and polynomial functions on arbitrary subsets of Dedekind rings. J. Reine Angew. Math. {\bf 490}, 101--127. 
\bibitem{bh98} Bhargava, M. (1998). Generalized factorials and fixed divisors over subsets of a Dedekind domain, J. Number Theory {\bf 72}, 67--75.
\bibitem{bh00} Bhargava, M. (2000). The factorial function and generalizations. Amer. Math. Monthly 107, no. {\bf 9}, 783--799. 
\bibitem{Da20} Diaz, B. (2020). Asymptotics on a class of Legendre formulas. Preprint, arXiv:2010.13645.
\bibitem{ev12}  Evrard, S. (2012). Bhargava's factorials in several variables. J. Algebra 372, 134--148.
\bibitem{fj12} Fares, Y., Johnson, K. (2012). The Characteristic Sequence and p-Orderings of the Set of d-th Powers of Integers. Integers {\bf 12}, no. 5, 865--876.
\bibitem{ma} H. Matsumura. (1989). Commutative ring theory (No. 8). Cambridge university press.
\bibitem{kad18} Rajkumar, K., Reddy, A. S., Semwal, D. P. (2018). Fixed divisor of a multivariate polynomial and generalized factorial in several variables. J. Korean Math. Soc. 55, no. 6, 1305--1320. 
\end{thebibliography}
\end{document}